\documentclass[11pt]{article}
\usepackage{amssymb}
\usepackage{amsmath}
\usepackage{amsthm}
\usepackage{epsfig}
\usepackage[mathscr]{eucal}
\def\RR{{\mathbb R}}

\def\beqns{\begin{eqnarray*}}
\def\eeqns{\end{eqnarray*}}
\def\beqn{\begin{eqnarray}}
\def\eeqn{\end{eqnarray}}

\def\no{\noindent}
\def\no{\noindent}

\def\ca{{\mathcal A}}

\def\dis{\displaystyle}
\def\no{\noindent}

\usepackage{amssymb,amsmath,amsthm}

\newcommand{\R}{\mathbb R}
\newcommand{\N}{\mathbb N}

\newtheorem{theorem}{Theorem}[section]
\newtheorem{lemma}[theorem]{Lemma}

\newtheorem{corollary}[theorem]{Corollary}
\newtheorem{proposition}[theorem]{Proposition}

\renewenvironment{proof}{{\noindent \bf
Proof:}}{\hfill\qed\bigskip}

\numberwithin{equation}{section}

\numberwithin{equation}{section}

\topmargin -.5in \textheight 9in \textwidth 6.5in \oddsidemargin
0.0in \evensidemargin 0.0in

\pagestyle{myheadings}

\numberwithin{equation}{section}

\topmargin -.5in \textheight 9in \textwidth 6.5in \oddsidemargin
0.0in \evensidemargin 0.0in

\pagestyle{myheadings}

\begin{document}

\title{Optimal order Jackson type inequality for scaled Shepard approximation }
\author{Steven\ Senger,  Xingping\ Sun, and  Zongmin\ Wu}

\date{}

\maketitle
\begin{abstract}
We study a variation of the Shepard \cite{shepard} approximation scheme by introducing a dilation factor into the base function, which synchronizes with the Hausdorff distance between the data set and the domain. The novelty enables us to establish an optimal order Jackson \cite{jackson} type error estimate (with an explicit constant) for bounded continuous functions on any given convex domain. We also improve en route an upper bound estimate due to Narcowich and Ward \cite{NW} for the numbers of well-separated points in thin annuli,  which is of independent interest.
\end{abstract}

\no{\bf Key Words and Phrases:} Hausdorff distance, Jackson type error estimate, quasi-interpolation operator, quasi-uniformity, rational formations, well-separateness.

\bigskip
\no{\bf AMS Subject Classification 2010:} 41A17, 41A35, 41A36, 41A46.

\newcommand{\cx}{\mathcal X}
\newcommand{\ck}{\mathcal K}
\newcommand{\ct}{\mathcal T}
\medskip

\section{Introduction}
When dealing with real world problems with high degrees of complexity, scientists often observe that unknown target functions are imprecise and elusive, and that data acquired on them do not always reflect their true nature. This can be caused by a host of known and unknown reasons. To name just a few, one may encounter reading and interpreting errors, lost in translation, instrument failures or malfunctions. In the literature, this is often referred to as the ``noisy data" phenomenon (\cite{behe}). Under these circumstances, models established by computationally-expensive algorithms often do not  survive the test of cross validation and sometimes  fail outright to reflect reality in ways they are designed for. Long and hard work devoted to the establishment of the models is quickly rendered worthless. To make meaningful decisions, one needs to use
the best available data to build multiple models and test them against newly acquired data. The process is often repeated numerous times,  as the so called ``best available data" and model selection criteria may also subject to uncertainty of various degrees.

The challenging computing environment has ruled out the employability of most of the interpolation methods and many of the quasi-interpolation methods. An interpolation procedure entails solving a large linear system, which is often expensive and slow. In addition, interpolating ``noisy data" is like playing a meaningless  hide-and-seek game during which one seldom knows the boundary between ``over-fitting" and ``under-fitting". Many quasi-interpolation schemes rely on elaborated mathematical procedures to find the values of the parameters, which is unpractical in reality.

Shepard \cite{shepard} proposed in $\R^2$ the following approximation scheme. Suppose function values $f(x_j)\;
(1 \le j \le n)$ are available at the scattered sites $\{x_1, \ldots, x_n\}$ in a domain $X$. Then a function of the form:
\[
x \mapsto S_{\Phi,n}(x):=[\Phi_n(x)]^{-1} \sum^n_{j=1}f(x_j)\ \phi(|x - x_j|),
\]
is constructed to approximate the target function $f$ in $X$. Here $\phi: x \mapsto |x|^{-\lambda}, \; \lambda \ge 1$, and $\Phi_n(x)=\sum^n_{j=1}\ \phi(x - x_j)$, in which $|x|$ denotes the Euclidean norm of $x \in \R^2$. This procedure has since become known as ``Shepard approximation", and variations of it have been studied in \cite{bede}, \cite{chen-cao}, \cite{cost}, \cite{szab},  \cite{tikk}, \cite{wendland}, \cite{wu-1}, \cite{wu}, and the references therein.

In a nutshell, a Shepard approximation scheme employs rational formations of shifts of an appropriately-selected base function to approximate a target function, and its efficiency epitomizes in the reproduction of constants. Because of the singularity of the base function at zero, the original Shepard approximant interpolates  values of the target function at $x_j,\; 1 \le j \le n$. That is,
$S_{\Phi,n}(x_j)=f(x_j),\; 1 \le j \le n$. However, for most other choices of base functions, the interpolation feature is lost, and the resulted approximants are called ``quasi-interpolants" in the literature; see \cite{chen-cao}, \cite{wu-1}, \cite{wu}. Except for the cases in which the base function is compactly supported (see \cite{wendland}, \cite{wu-1}, \cite{wu}), deriving optimal order error estimates for a Shepard approximation scheme has been uncommon. In this paper, we study a new variation of Shepard approximation by introducing a dilation factor into the base function, which synchronizes with the Hausdorff distance between the data set and the domain. The novelty enables us to establish an optimal order Jackson \cite{jackson} type error estimate (with an explicit constant) for bounded continuous functions.
 The proof requires decomposing the domain as the union of concentric thin annuli with no common interior.
 This is a standard technique in many branches of analysis. Notably, the technique has recently been applied by authors of \cite{HNW} and \cite{HNSW} in bounding the $L_\infty$-norms of interpolation operators and the least square operators associated with radial basis functions.
The following question arises naturally: how many well-separated points can be put inside a specified annulus?
For the special case in which the annuli have thickness $\delta$ and inner radius $j\delta, \; j \in \N$, where $\delta$ is the separation radius of the data set, Narcowich and Ward \cite{NW} gave the upper bound estimate: $3^d\ j^{d-1}$, where $d$ is the dimension of the ambient space. In our situation, however, the annuli have thickness precisely the Hausdorff distance between the data set and the domain, which is usually larger than the separation radius of the data set.
 Adapting their method in this setting has resulted in an unexpected large constant that has a fast-growing exponential rate with respect to the dimension. In order to refine this, we employ the Gauss hypergeometric functions, and engage in a more labor-intensive procedure to obtain a tighter estimate (Proposition \ref{lemma}). We also derive a new upper bound estimate for the above special case (Corollary \ref{NW}). Besides their utility in this paper to obtain optimal order Jackson type error estimate, these upper bound estimates provide information on the minimal densities of packing spheres or spherical wedges in annuli of $d-$dimensional space. Therefore, their asymptotic behaviors with respect to both $j$ and $d$ need to be closely watched.  The results of Proposition \ref{lemma} and Corollary \ref{NW} show that in packing spheres or spherical wedges in annuli, both the size of the spheres (or spherical wedges) and that of the annuli are sensitive matters. This is in contrast to density results from conventional sphere packings in Euclidean spaces; see \cite{conway} and \cite{cohn}.
 Details of the above discussion will be given in
 Section 2.  The main result of the paper (Jackson type error estimate) and its proof will be given in Section 3.
  A focus of the current paper is to provide an readily-implementable and yet efficient approximation method. As such, we write the paper in a style catering to the needs of practitioners in the field. Among other efforts devoted to obtain the results, we derive all the constants explicitly in closed forms.

\section{Numbers of well-separated points in thin annuli}

To be savvy with using notations, we adopt the notations: (i) $|t|$ for the absolute value of a real number $t$; (ii) $|E|$ for the cardinality of a finite set $E$; (iii) $|x-y|$ the Euclidean distance between $x, y \in \R^d$;  and (iv) $|A|$ for the Lebesgue measure of a bounded Lebesgue-measurable set $A$. Readers can easily tell each individual meaning from the mathematical context therein.
Let  $X \subset \R^d$ be a convex domain. Let $\cx_n \subset X$ be a sequence of discrete point sets. If there is a constant $c$ independent of $n$, such that
\begin{equation}\label{well-separated}
 \inf_{\substack{x \ne y\\x,y \in \cx_n}} |x - y| \ge c n^{-1/d},
\end{equation}
then we say that the points of $\cx_n$ are well-separated, or equivalently that the point set $\cx_n$ is well-separated. We will use $q_{\cx_n}$ to denote half of the above infimum.
Let $h_{\cx_n}$ be the Hausdorff distance (\cite{munkres}) between the two sets $\cx_n$ and $X$. That is
\begin{equation}\label{hausforff-d}
  h_{\cx_n}:= \sup_{x \in X} \inf_{y \in \cx_n} |x - y|.
\end{equation}
If $\cx_n$ is well-separated, and if in addition there is a constant, $C >0$, independent of $n$, such that
\begin{equation}\label{fill-dist}
h_{\cx_n} \le C n^{-1/d},
\end{equation}
 then we say that $\cx_n$ is quasi-uniformly distributed in $X$. In the literature, $h_{\cx_n}$ is often called the ``fill-distance" of the set $\cx_n$ in the set $X$, and $q_{\cx_n}$ the separation radius of the point set $\cx_n$. It is obvious that $h_{\cx_n} \ge q_{\cx_n}$. Thus, the two constants $c, C$ as in Inequalities \eqref{well-separated} and \eqref{fill-dist} satisfy $C \ge c/2$. In the current paper,  we will reserve the two constants $c, C $ exclusively for the purposes as dictated in \eqref{well-separated} and \eqref{fill-dist}.
%
An interesting example for such a set $\cx_n$ is the  hexagonal lattice in $ \R^2$ with the generating matrix
\[
n^{-1/2} \left(
               \begin{array}{cc}
                 \sqrt{3} & 0 \\
                 -1 & 2 \\
               \end{array}
             \right).
             \]
In this example, we have $q_{\cx_n}= n^{-1/2}$, and $ h_{\cx_n}= n^{-1/2} \frac{2}{\sqrt{3}}$.
If $X$ is bounded, then a quasi-uniformly distributed set $\cx_n \subset X $ is necessarily finite. In this case, the quasi-uniformity can be equivalently defined as follows. There exists a constant $\rho (d)$, depending only on  $d$, such that
 \[
 h_{\cx_n} / q_{\cx_n} \le \rho(d).
 \]
The above inequality is equivalent to that there are two constants $0<C_1(d) \le C_2(d)$, depending only on $d$, such that
\[
C_1(d)\ n^{-1/d} \le q_{\cx_n} \le h_{\cx_n}  \le C_2(d)  n^{-1/d}.
\]

We are interested in finding an upper bound for the number $|\cx_n\cap \ca| $, where $\ca $ is an annulus of outer radius $R$ and thickness $Cn^{-\frac{1}{d}}$.
Our basic approach in answering the above question is to use the pigeonhole principle. Let $\epsilon = c n^{-\frac{1}{d}}$.  Select any two points, $p$ and $q$ from $E_n$. Let $B(p, \epsilon)$ denotes the ball centered at $p$ and having radius $\epsilon$. Notice that, as points from $\cx_n$ are $\epsilon$-separated, the intersection
$B(p, \epsilon/2) \cap B(q, \epsilon/2)$ has empty interior. Therefore, it is tempting to get an upper bound on the number of points in the annulus, $\ca$, by packing spheres \cite{conway} of radius $\epsilon/2$ in $\ca$. We would proceed by using the pigeonhole principle to get:
\begin{equation}\label{badPHole}
\left| \cx_n\cap \ca \right| \leq \frac{|\ca|}{|B^d(\epsilon/2)|},
\end{equation}
where $B^d(\epsilon/2)$ is a ball in $\R^d$ of radius $\epsilon/2$.
However, independent of $c$, it is possible for points of $\cx_n$ to lie on or close to the boundary of $\ca$. These points do not enjoy a full measurement of volumes of their associated balls contributing to the calculation above based on the pigeonhole principle. This is illustrated in Figure \ref{ideal}.

To correct the estimate in \eqref{badPHole}, we need to estimate the minimal possible volume of the intersection (with the annulus) of a small ball whose center lies in the annulus.  The correct version of \eqref{badPHole} is therefore the following:
\begin{equation}\label{PHole}
\left| E_n\cap \ca \right| \leq \frac{|\ca|}{|B_\ca^d(\epsilon/2)|},
\end{equation}
where $B_\ca^d(\epsilon/2)$ is the minimal intersection of any ball of radius $\epsilon/2$ centered at a point in the annulus, or
\begin{equation}\label{inter}
|B_\ca^d(\epsilon/2)| = \min_{q\in \ca}|\ca \cap (B^d(\epsilon/2)+q)|.
\end{equation}\\
We are therefore packing spherical wedges of two different sizes in a specified annulus. We illustrate the situation for the case $d=2$ in Figure 2.
A simple convexity argument shows that the above minimum is attained by a ball $B^d(\epsilon/2)$ centered at a point on the outer boundary of $\ca$. It stands to reason that finding a tight lower bound for the quantity $|B_\ca^d(\epsilon/2)|$ has become the center piece of the puzzle.
\begin{figure}
\centering
\includegraphics[scale=1]{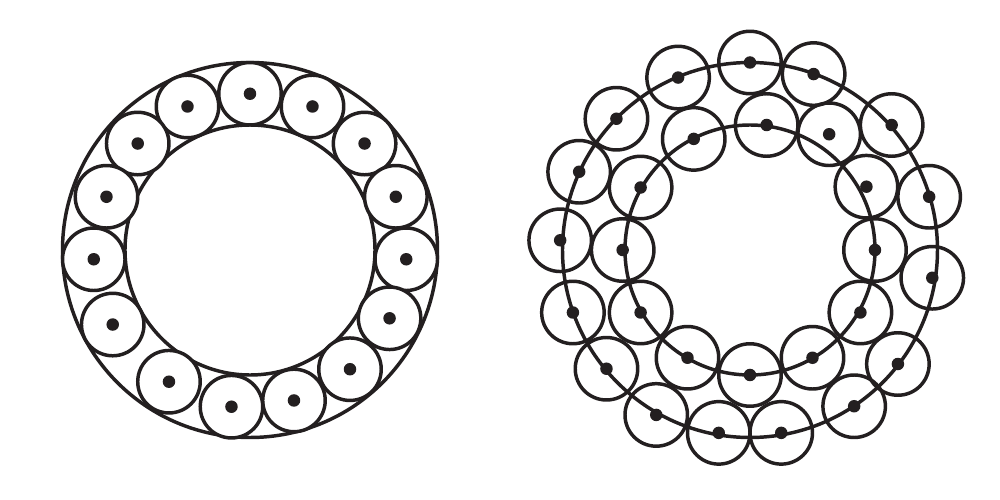}
\caption{On the left, we see a picture of the set consisting of 15 points in an annulus, each centered at mutually disjoint balls that are contained in the annulus. On the right, we see that more points centered at mutually disjoint balls can be in the annulus if we do not require that each ball is contained in the annulus.}
\label{ideal}
\end{figure}
To this goal, we will utilize the Gauss hyper-geometric function $\,_2 F_1 (a,b;c;z)$ defined by
\[
\,_2 F_1 (a,b;c;z)=\sum^\infty_{n=0}\frac{(a)_n (b)_n}{n!(c)_n} z^n, \quad |z| < 1,
\]
where $(a)_n$ is the (rising) Pochhammer symbol, which is defined by:
\[
(a)_n = \begin{cases} 1 & n = 0 \\ a(a+1) \cdots (a+n-1) & n > 0. \end{cases}
\]
We will need Euler's integral representation formula for hyper-geometric function; see \cite{askey}:
\begin{align} \label{euler}
\,_2F_1(a,b;c;z) = & \frac{\Gamma(c)}{\Gamma(b) \Gamma(c-b)} \int_0^1 x^{b-1} (1-x)^{c-b-1}(1-zx)^{-a}\ dx, \nonumber\\
 &\mbox{Re}(c) > \mbox{Re}(b) > 0, \quad |z| < 1.
\end{align}
While there is a wealth of literature devoted to hyper-geometric functions, we refer readers to \cite{askey} for a proof of the above Euler's integral representation formula for hyper-geometric function.

\begin{proposition}\label{dProp}
The volume of intersection of a $d$-dimensional ball of radius $R$ and a smaller $d$-dimensional ball of radius $r$ centered at a point in the larger ball is at least
\begin{align*}
 & \left(\sin^{d+1}\frac{A}{2}\right) \,
 \frac{2^{d+1} \,  \pi^\frac{d-1}{2} \, r^d}{(d+1) \, \Gamma\left(\frac{d+1}{2}\right)} \,_2 F_1 \left(-\frac{d-1}{2},\frac{d+1}{2}; \frac{d+3}{2}; \sin^2\frac{A}{2}\right),
\end{align*}
where $A=A(r,R)=\cos^{-1} \frac{r}{2R}.$
\end{proposition}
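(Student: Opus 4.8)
The plan is to reduce to the extremal configuration, exhibit an explicit spherical cap sitting inside the intersection, and then evaluate its volume in closed form. First I would note that the intersection volume, viewed as a function of the position of the center $q$ of the smaller ball, is minimized when $q$ lies on the bounding sphere of the larger ball — the same convexity reasoning already recorded for the annulus problem applies here. Hence it suffices to prove the bound in the worst case $|q| = R$; for any interior position the intersection is only larger and the same lower bound holds a fortiori. Place $B(R)$ at the origin and set $q = (R,0,\ldots,0)$, writing $t = x_1$ for the coordinate along the axis $Oq$. Subtracting the two defining inequalities $|x|^2 \le R^2$ and $|x-q|^2 \le r^2$ shows at once that the two boundary spheres meet in the hyperplane $t = t^\ast := R - r^2/(2R)$.

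The key geometric step is to observe that the cap $C := \{x : |x-q| \le r,\ x_1 \le t^\ast\}$ of the small ball lying on the origin side of this hyperplane is contained in $B(R)$, and hence in the intersection. This reduces to a single cross-sectional inequality: at height $t$ the small ball permits $|x'|^2 \le r^2 - (t-R)^2$ and the large ball permits $|x'|^2 \le R^2 - t^2$, and a one-line computation gives $r^2 - (t-R)^2 \le R^2 - t^2$ precisely when $t \le t^\ast$. Thus every slice of $C$ lies inside the corresponding slice of $B(R)$, so $C \subseteq B(R)\cap (B(r)+q)$ and it suffices to bound $|C|$ from below. In fact I would compute $|C|$ exactly: the stated expression turns out to equal this cap volume, the genuine intersection exceeding it by the complementary cap of the large ball, which we simply discard.

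To evaluate $|C|$ I would slice perpendicular to the axis, obtaining $|C| = \omega_{d-1}\int_{R-r}^{t^\ast}\bigl(r^2 - (t-R)^2\bigr)^{(d-1)/2}\,dt$, where $\omega_{d-1} = \pi^{(d-1)/2}/\Gamma(\tfrac{d+1}{2})$ is the volume of the unit $(d-1)$-ball. The substitution $t = R - r\cos\psi$ sends the limits $t = R-r$ and $t = t^\ast$ to $\psi = 0$ and $\psi = A$ respectively (since $\cos A = r/2R$) and turns the integrand into $(r\sin\psi)^{d-1}$ with $dt = r\sin\psi\,d\psi$, yielding the clean intermediate form $|C| = \omega_{d-1}\,r^{d}\int_0^{A}\sin^{d}\psi\,d\psi$.

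It remains to recast $\int_0^{A}\sin^{d}\psi\,d\psi$ as the advertised hypergeometric quantity, and this conversion is the main piece of bookkeeping. Using the half-angle substitution $u = \sin^2(\psi/2)$, together with $\sin\psi = 2\sqrt{u(1-u)}$ and $d\psi = du/\sqrt{u(1-u)}$, the integral becomes $2^{d}\int_0^{U} u^{(d-1)/2}(1-u)^{(d-1)/2}\,du$ with $U = \sin^2(A/2)$, an incomplete Beta integral. Rescaling $u = Ux$ and matching parameters in Euler's representation \eqref{euler} — with $b = \tfrac{d+1}{2}$, $c = \tfrac{d+3}{2}$ (so $c-b-1 = 0$), $a = -\tfrac{d-1}{2}$, and $z = U$ — identifies this integral with $\tfrac{2^{d+1}}{d+1}\sin^{d+1}(A/2)\,\,_2F_1\!\bigl(-\tfrac{d-1}{2},\tfrac{d+1}{2};\tfrac{d+3}{2};\sin^2\tfrac{A}{2}\bigr)$. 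Multiplying by $\omega_{d-1}r^{d}$ and using $\Gamma(\tfrac{d+3}{2}) = \tfrac{d+1}{2}\Gamma(\tfrac{d+1}{2})$ reproduces the stated constant exactly. The only genuine obstacles are pinning down the parameter identification in Euler's formula and tracking the powers of $2$; the geometry — the cap containment — is the conceptual heart and amounts to a single inequality.
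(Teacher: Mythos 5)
Your proposal is correct and follows essentially the same route as the paper: both reduce to the extremal boundary configuration, split the lens along the radical hyperplane $t=t^\ast$, keep exactly the cap of the small ball (your $C$ is the paper's $II_d(r,R)$, the discarded big-ball cap being the paper's $I_d(r,R)\ge 0$), and evaluate it via $\omega_{d-1}r^d\int_0^A\sin^d\psi\,d\psi$, the half-angle substitution, and Euler's integral representation with the same parameter choices $a=-\tfrac{d-1}{2}$, $b=\tfrac{d+1}{2}$, $c=\tfrac{d+3}{2}$. The only cosmetic difference is that you justify dropping the second cap by a slicewise containment argument rather than by bounding $I_d$ explicitly; the computation and constants agree with the paper's.
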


\begin{proof}
First, recall the following formula for the volume of a $d$-dimensional ball of radius $t$, which we denote $B^d(t)$, is:
\begin{equation}\label{ballVol}
\left|B^d(t)\right|: = V_d(t)=\frac{\pi^\frac{d}{2}}{\Gamma\left(\frac{d}{2}+1\right)}t^d
\end{equation}
We center the small ball (with radius $r$) at the origin, and the big ball (with radius $R$) at the point $(R, 0, \cdots,0)$, which gives rise to a situation where minimal intersection occurs. We denote the intersection of the two balls by $B_R^d(r)$, and write down:
\begin{equation}\label{dInt}
\left|B_R^d(r)\right|=\int_0^{\tilde{x}}V_{d-1}\left(\sqrt{2Rx-x^2}\right)dx+\int_{\tilde{x}}^rV_{d-1}\left(\sqrt{r^2-x^2}\right)dx = I_d(r,R) + II_d(r,R),
\end{equation}
in which $\tilde{x}=\frac{r^2}{2R}$. We have illustrated the cases $d=2,3$, respectively, in Figures 2 and 3.
To calculate $II_d(r,R),$ we write
\begin{align} \label{con-1}
II_d(r,R)
&=\frac{\pi^\frac{d-1}{2}}{\Gamma\left(\frac{d+1}{2}\right)}\int_{\tilde{x}}^r \left(r^2-x^2\right)^\frac{d-1}{2}dx
= \frac{r^d\, \pi^\frac{d-1}{2}}{\Gamma\left(\frac{d+1}{2}\right)}\int^1_{\frac{r}{2R}} \left(1-x^2\right)^\frac{d-1}{2}dx .
\end{align}
We now use the Gauss hyper-geometric function to evaluate the integral on the right hand side of the above equation. First we write
\begin{align*}
\int^1_{\frac{r}{2R}} \left(1-x^2\right)^\frac{d-1}{2}dx & =\int^A_{0}  \sin^d t dt, \quad \mbox{where} \quad A = \cos^{-1}\frac{r}{2R}.
\end{align*}
We then use the substitution
\[
\sin \frac{t}{2} = \sin \frac{A}{2} \sin \theta, \quad \frac12 \cos \frac{t}{2} dt = \sin \frac{A}{2} \cos \theta d \theta
\]
to reduce the integral.  We have
\begin{align} \label{con-2}
\int^A_{0}  \sin^d t dt & = 2^{d+1} \sin^{d+1}\frac{A}{2} \int^{\frac{\pi}{2}}_{0}\cos \theta \sin^d \theta \left( 1- \sin^2\frac{A}{2} \sin^2 \theta \right)^{\frac{d-1}{2}} d \theta \nonumber\\
& = 2^{d} \sin^{d+1}\frac{A}{2} \int^1_0 t^{\frac{d-1}{2}} \left( 1- t \sin^2\frac{A}{2} \right)^{\frac{d-1}{2}} dt.
\end{align}
We use Euler's integral representation (Eq. \eqref{euler}) for the hypergeometric function to write
\begin{align} \label{con-3}
\int^1_0 t^{\frac{d-1}{2}} \left( 1- t \sin^2\frac{A}{2} \right)^{\frac{d-1}{2}} dt = &
\frac{2}{d+1} \,_2 F_1 \left(-\frac{d-1}{2},\frac{d+1}{2}; \frac{d+3}{2}; \sin^2\frac{A}{2}\right).
\end{align}
Combining Equations \eqref{con-1}, \eqref{con-2}, and \eqref{con-3}, we have
\begin{align} \label{combined}
II_d(r,R) &= \sin^{d+1}\frac{A}{2} \,
 \frac{2^{d+1} \, r^d\, \pi^\frac{d-1}{2}}{(d+1) \, \Gamma\left(\frac{d+1}{2}\right)} \,_2 F_1 \left(-\frac{d-1}{2},\frac{d+1}{2}; \frac{d+3}{2}; \sin^2\frac{A}{2}\right).
\end{align}
 Similarly, we can find a closed formula for $I_d(r,R)$ in terms of the Gauss hyper-geometric function. However, the following crude estimate shows that
 \begin{align*}
I_d(r,R)
&=\frac{\pi^\frac{d-1}{2}}{\Gamma\left(\frac{d + 1}{2}\right)}\int^{\tilde{x}}_0  \left(2R x
-x^2\right)^\frac{d-1}{2}dx
\le \frac{\pi^\frac{d-1}{2} r^{d+1}}{(d+1) R\, \Gamma\left(\frac{d + 1}{2}\right)}  .
\end{align*}
Since we are mostly concerned with relatively large values of $R$ in comparison  to $r$, we drop $I_d(r,R)$ from contention to derive the desired estimate.
\end{proof}

\begin{figure}
\centering
\includegraphics[scale=1]{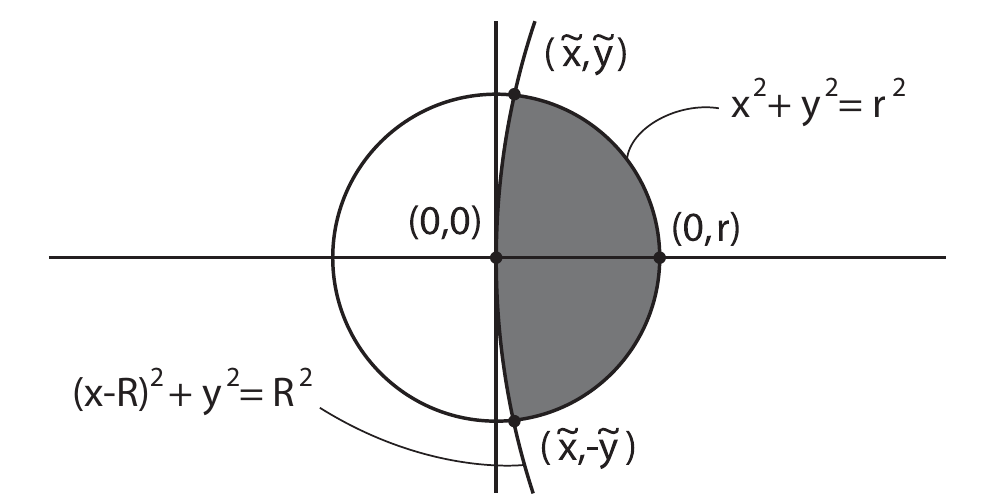}
\caption{This is the intersection of a large disk of radius $R$, and small disk of radius $r$, centered at a boundary point of the larger disk. We can assume that the center of the small disk is the origin, and we have the points of intersection of the boundaries labeled $(\tilde{x},\tilde{y})$ and $(\tilde{x},-\tilde{y})$, where
$\tilde{x}=\frac{r^2}{2R}, \tilde{y}=r\sqrt{\left(1-\frac{r^2}{4R^2}\right)}.$}
\label{2dmin}
\end{figure}

\begin{figure}
\centering
\includegraphics[scale=1]{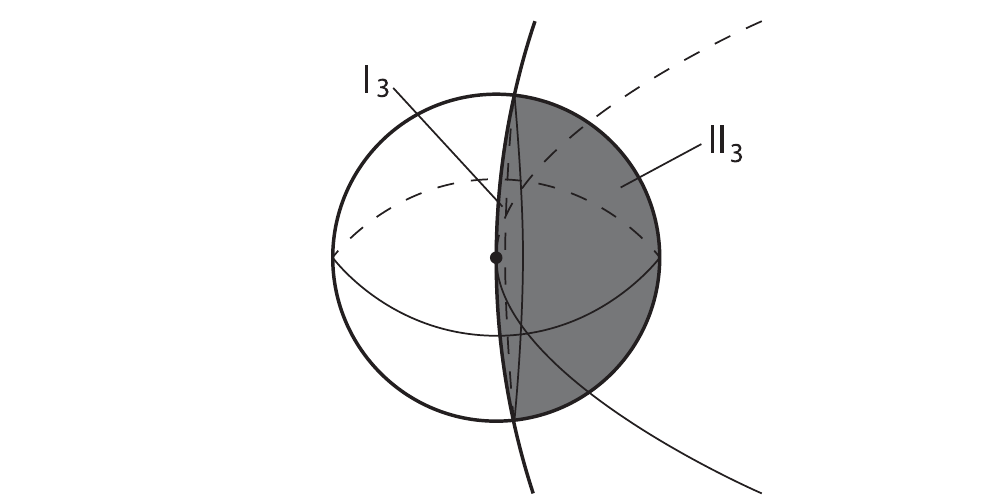}
\caption{This is the intersection of a large ball of radius $R$, and small ball of radius $r$, centered at a boundary point of the larger ball. Here, $I_3$ will measure the volume of the cap on the left, and $II_3$ will measure the volume of the cap on the right.}
\label{3dmin}
\end{figure}

Suppose that we are given a collection of concentric annuli with the same thickness $C n^{-1/d}$. Without loss of generality we assume that the center of these annuli is at the origin. We enumerate them from inside out: $\ca_{n,1}, \ca_{n,2}, \ldots.$ (The first one $\ca_{n,1}$ is a ball centered at $x$ and having radius $C n^{-1/d}$). For each fixed $j$, $\ca_{n,j}$ has outer radius $C\,j n^{-1/d}$. It follows that
the volume of $\ca_{n,j}$ is,
\begin{equation}\label{annMeas}
|\ca_{n,j}| = \frac{C^d \pi^\frac{d}{2}}{n \,\Gamma\left(\frac{d}{2}+1\right)}\left[ j^d - (j-1)^d \right] .
\end{equation}

\begin{proposition}\label{lemma}
Let $\cx_n$ be a discrete subset of $\R^d$ consisting of well-separated points that satisfy \eqref{well-separated}. Then the following inequality holds true:
\[
\left| \cx_n \cap \ca_{n,j}\right| \le K_d \left[ j^d - (j-1)^d \right], \quad j=1,2,\ldots,
\]
where
\[
K_d :=K_d (c,C)= 2^{\frac{3d+3}{2}}\  \left( \frac{C}{c} \right)^d.
\]
\end{proposition}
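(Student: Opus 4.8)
The plan is to substitute the sharp two–ball intersection estimate of Proposition~\ref{dProp} into the corrected pigeonhole bound \eqref{PHole}, and then to control the resulting hypergeometric factor \emph{uniformly in} $j$.

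First I would set $\epsilon=cn^{-1/d}$ and $r=\epsilon/2$. Since the points of $\cx_n$ are $\epsilon$-separated, the balls $B^d(r)$ centred at them have pairwise disjoint interiors, so \eqref{PHole} applies. Because $C\ge c/2$, i.e.\ $c\le 2C$, the radius $r=\frac{c}{2}n^{-1/d}$ does not exceed the thickness $Cn^{-1/d}$ of the annulus; hence a ball of radius $r$ placed on the outer sphere of $\ca_{n,j}$—where, by the convexity remark, the minimal intersection \eqref{inter} is attained—never reaches the inner sphere. Thus $|B^d_{\ca_{n,j}}(r)|$ equals the two–ball intersection of Proposition~\ref{dProp} with $R=R_j:=Cjn^{-1/d}$, and dropping the nonnegative term $I_d$ gives $|B^d_{\ca_{n,j}}(r)|\ge II_d(r,R_j)$, where $A_j=\cos^{-1}\frac{r}{2R_j}=\cos^{-1}\frac{c}{4Cj}$.

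Next I would bound $II_d(r,R_j)$ from below uniformly in $j$. From $c\le 2C$ we get $\cos A_j=\frac{c}{4Cj}\le\frac12$, so $A_j\ge\pi/3$ for every $j$. Combining \eqref{con-2} and \eqref{con-3} one has $\sin^{d+1}\frac{A}{2}\,{}_2F_1(-\frac{d-1}{2},\frac{d+1}{2};\frac{d+3}{2};\sin^2\frac{A}{2})=\frac{d+1}{2^{d+1}}\int_0^A\sin^d t\,dt$, which is increasing in $A$; evaluating at the worst case $A=\pi/3$ and estimating the integrand in \eqref{con-3} by $(1-x\sin^2\frac{A}{2})^{(d-1)/2}\ge\cos^{d-1}\frac{A}{2}$ yields ${}_2F_1(\cdots;\tfrac14)\ge(\tfrac34)^{(d-1)/2}$ and hence the closed form $\sin^{d+1}\frac{A_j}{2}\,{}_2F_1\ge(\tfrac12)^{d+1}(\tfrac{\sqrt3}{2})^{d-1}=3^{(d-1)/2}2^{-2d}$. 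Substituting this with \eqref{combined} and \eqref{annMeas} into \eqref{PHole}, the factor $j^d-(j-1)^d$ cancels on both sides, leaving $|\cx_n\cap\ca_{n,j}|\le K_d'\,[j^d-(j-1)^d]$ with $K_d'=\dfrac{\sqrt\pi\,(d+1)\,\Gamma(\frac{d+1}{2})\,2^{2d-1}}{\Gamma(\frac{d}{2}+1)\,3^{(d-1)/2}}\,(C/c)^d$.

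The last step, which I expect to be the main obstacle, is to show $K_d'\le 2^{(3d+3)/2}(C/c)^d$, equivalently
\[
\sqrt\pi\,(d+1)\,\frac{\Gamma(\frac{d+1}{2})}{\Gamma(\frac{d}{2}+1)}\le 2^{(5-d)/2}\,3^{(d-1)/2}.
\]
This is sharp at $d=1$ (both sides equal $4$), so no slack may be squandered; indeed, estimating $\sin^{d+1}\frac{A}{2}$ and the hypergeometric separately by their individual worst cases loses a factor of order $\sqrt d$ and fails, which is precisely why the joint estimate through $\int_0^A\sin^d t\,dt$ is needed. I would prove the displayed inequality by verifying $d\le 4$ directly and then inducting: log–convexity of $\Gamma$ gives $\Gamma(\frac{d+2}{2})^2\le\Gamma(\frac{d+1}{2})\Gamma(\frac{d+3}{2})$, so the left side grows by a factor at most $\frac{d+2}{d+1}$ under $d\mapsto d+1$, while the right side grows by the constant $\sqrt{3/2}$; since $\frac{d+2}{d+1}\le\frac{6}{5}<\sqrt{3/2}$ for $d\ge 4$, the inequality propagates. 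This yields $K_d=2^{(3d+3)/2}(C/c)^d$ and completes the proof.
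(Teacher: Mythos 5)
Your proposal is correct, and its skeleton is the same as the paper's: set $r=\tfrac{c}{2}n^{-1/d}$ and $R=Cjn^{-1/d}$, feed Proposition~\ref{dProp} into the pigeonhole bound \eqref{PHole} together with \eqref{annMeas}, and then bound the factor $\sin^{d+1}\tfrac{A_j}{2}\,{}_2F_1$ from below uniformly in $j$ using $\tfrac14<\sin^2\tfrac{A_j}{2}<\tfrac12$. Where you genuinely diverge is in that last step, and one of your side claims there is wrong. The paper bounds the two factors \emph{separately} at their individual worst cases --- $\sin^{d+1}\tfrac{A_j}{2}\ge 2^{-(d+1)}$ at $\sin^2\tfrac{A_j}{2}=\tfrac14$, and ${}_2F_1\ge\tfrac{d+1}{2}\int_0^1t^{(d-1)/2}(1-t/2)^{(d-1)/2}\,dt=(d+1)2^{(d-3)/2}\Gamma^2\bigl(\tfrac{d+1}{2}\bigr)/\Gamma(d+1)$ at $\sin^2\tfrac{A_j}{2}=\tfrac12$ --- and then the Legendre duplication formula cancels every Gamma factor, landing \emph{exactly} on $2^{(3d+3)/2}(C/c)^d$ with no residual inequality to check. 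So your assertion that the separate worst-case estimates ``lose a factor of order $\sqrt d$ and fail'' is false; that is precisely the paper's computation. Your alternative --- exploiting $\sin^{d+1}\tfrac{A}{2}\,{}_2F_1=\tfrac{d+1}{2^{d+1}}\int_0^A\sin^dt\,dt$, its monotonicity in $A$, and the worst case $A=\pi/3$ --- is valid and yields a strictly sharper constant $K_d'$, but that sharpness is exactly why you then owe the extra inequality $K_d'\le 2^{(3d+3)/2}(C/c)^d$; your verification of it (base cases $d\le 4$ plus the log-convexity/induction step using $\tfrac{d+2}{d+1}\le\tfrac65<\sqrt{3/2}$) is correct, just more labor than the paper needs. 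One genuine improvement on your side: you justify why the minimal annulus intersection coincides with the two-ball intersection of Proposition~\ref{dProp}, namely that $r\le Cn^{-1/d}$ keeps the small ball centered on the outer sphere from reaching the inner sphere; the paper leaves this implicit. In short, both routes reach $K_d=2^{(3d+3)/2}(C/c)^d$: the paper's is shorter because the duplication formula self-cancels, yours is tighter but pays for it with the additional Gamma-ratio inequality.
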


We select this format (as in the above estimate) to facilitate the telescoping technique as in the proof of Theorem 3.3.

\begin{proof}
Setting $$r=\frac{\epsilon}{2}=\frac{cn^{-\frac{1}{d}}}{2}, \quad \mbox{and} \quad R= Cj n^{-1/d}, $$
and using Equations \eqref{PHole}, \eqref{annMeas}, and the result of Proposition \ref{dProp},  we have
\begin{align} \label{final-estimate}
&\left| E_n \cap \ca_{n,j}\right| \leq \frac{|\ca_{n,j}|}{|B_\ca^d(\epsilon/2)|}  \nonumber  \\
\leq &\frac{(d+1)\ \sqrt{\pi}}{2 \sin^{(d+1)}\frac{A_j}{2} } \,  \left( \frac{C}{c} \right)^d \frac{\Gamma\left(\frac{d+1}{2}\right)}{\Gamma\left(\frac{d}{2}+1\right)} \left[\,_2 F_1 \left(-\frac{d-1}{2},\frac{d+1}{2}; \frac{d+3}{2}; \sin^2\frac{A_j}{2}\right)\right]^{-1}\left[ j^d - (j-1)^d \right].
\end{align}
Here we have
\[
\sin^2\frac{A_j}{2}:= \frac{4Cj-c}{8Cj},
\]
which satisfy the inequalities
\begin{equation}\label{ine-1}
\frac14<  \frac{4Cj-c}{8Cj} < \frac12, \quad j=1,2,\ldots.
\end{equation}
To develop a more accessible upper bound for the constant on the right hand side of Inequality \eqref{final-estimate}, we use the inequality on the left hand side of Inequality \eqref{ine-1} to write
\begin{equation}\label{3.17}
\sin^{d+1}(A/2) \geq 4^{-\frac{d+1}{2}}=2^{-(d+1)}.
\end{equation}
Furthermore, we denote
 \[
 F_{2,1}:= {\displaystyle  \,_2 F_1 \left(-\frac{d-1}{2},\frac{d+1}{2}; \frac{d+3}{2}; \sin^2\frac{A_j}{2}\right)},
 \]
 and use Equation \eqref{euler} to write
\begin{align} \label{3.18}
  F_{2,1} & \geq \frac{d+1}{2} \int_0^1t^\frac{d-1}{2} (1-t\sin^2(A/2))^\frac{d-1}{2}~dt \nonumber \\
          & > \frac{d+1}{2} \int_0^1 t^\frac{d-1}{2}\left(1-t\left(\frac{1}{2}\right)\right)^\frac{d-1}{2} ~dt \nonumber \\
          & = (d+1)\ 2^\frac{d-1}{2} \int_0^{1/2} t^\frac{d-1}{2}(1-t)^\frac{d-1}{2} ~dt \nonumber \\
          & = (d+1)\ 2^\frac{d-3}{2} \int_0^1 t^\frac{d-1}{2}(1-t)^\frac{d-1}{2} ~dt \nonumber \\
          &= (d+1)\ 2^\frac{d-3}{2} \frac{\Gamma^2 \left( \frac{d+1}{2}\right)}{\Gamma( d+1)}.
\end{align}
Here we have used the following two formulas on Beta functions:
\begin{align*}
  B(x,y) & = \int_0^1 t^{x-1} (1-t)^{y-1} dt; \quad B(x,y)= \frac{\Gamma(x) \Gamma(y)}{\Gamma(x+y)}, \quad x, y >0.
  \end{align*}
Substituting Inequalities \eqref{3.17} and \eqref{3.18} into the right hand side of Inequality \eqref{final-estimate}, and using the following  formula
\[
\Gamma(z) \Gamma(z + \frac12)= 2^{1-2z}\ \sqrt{\pi}\  \Gamma(2z)
\]
to reduce the expression involving the Gamma-function values, we obtain
\begin{align} \label{final-esti}
&\left| E_n \cap \ca_{n,j}\right| \leq 2^{\frac{3d+3}{2}}\  \left( \frac{C}{c} \right)^d \left[ j^d - (j-1)^d \right].
\end{align}
This is the desired estimate.
\end{proof}

In the remainder of this section, we discuss the special case in which an annulus has thickness $\delta$ and outer radius $(j +1)\ \delta$. This case was studied by Narcowich and Ward \cite{NW} as early as in 1991. Let $\cx$ be a discrete subset of $\R^d$ with separation radius $\delta$.  That is
 \[
  \delta = \frac12 \inf_{\substack{x \ne y\\x,y \in \cx}} |x - y|.
\]
We have the following tighter estimate, which shows that for sufficiently large $j$, the exponential growth (with respect to the dimension $d$) of the constant
can be mitigated.

\begin{corollary}\label{NW}
Let $\cx$ be a discrete subset of $\R^d$ with separation radius $\delta$. Let $\ca_{\delta,j}$ be an annulus with thickness $\delta$ and inner radius $j\delta$. Then the following inequality holds true:
\[
|\cx \cap \ca_{\delta,j}| \le 2d\ j^{d-1}\  e^{\frac{5d-3}{4j-1}}.
\]
\end{corollary}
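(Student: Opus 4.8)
The plan is to specialize the machinery of Proposition~\ref{dProp} together with the pigeonhole bound \eqref{PHole} to this annulus, and then to track the constants sharply instead of invoking the crude steps \eqref{3.17} and \eqref{3.18} from the proof of Proposition~\ref{lemma}. Since the separation radius is $\delta$, the balls of radius $r=\delta$ centred at the points of $\cx$ have pairwise disjoint interiors, so \eqref{PHole} applies with $r=\delta$ and with $R=(j+1)\delta$, the outer radius of $\ca_{\delta,j}$ (where the minimal intersection is attained). The governing angle is $A=\cos^{-1}\frac{r}{2R}=\cos^{-1}\frac{1}{2(j+1)}$, and I would first record that $s:=\sin^2\frac{A}{2}=\frac12-\frac{1}{4(j+1)}\in(\tfrac14,\tfrac12)$; the crucial feature is that $s$ lies within $\frac{1}{4(j+1)}$ of $\frac12$, which is what the crude bound \eqref{3.17} discards.

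The key algebraic step is to recast the hypergeometric factor. Euler's formula \eqref{euler} (here $c-b=1$) gives $\,_2 F_1(-\tfrac{d-1}{2},\tfrac{d+1}{2};\tfrac{d+3}{2};s)=\frac{d+1}{2}\int_0^1 t^{(d-1)/2}(1-st)^{(d-1)/2}\,dt$, and the substitution $u=st$ converts this into $\frac{d+1}{2}\,s^{-(d+1)/2}I(s)$ with the incomplete Beta integral $I(s):=\int_0^s[u(1-u)]^{(d-1)/2}\,du$. The virtue of this form is that $s^{-(d+1)/2}$ cancels exactly against the $\sin^{-(d+1)}\frac{A}{2}$ factor in \eqref{final-estimate}. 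After inserting the volume $|\ca_{\delta,j}|=\frac{\pi^{d/2}}{\Gamma(d/2+1)}\delta^d[(j+1)^d-j^d]$ (the analogue of \eqref{annMeas}) and collapsing the remaining Gamma factors via the duplication formula $\Gamma(z)\Gamma(z+\tfrac12)=2^{1-2z}\sqrt\pi\,\Gamma(2z)$, I expect the leading constant to simplify to exactly $2$, leaving the clean inequality
\[
|\cx\cap\ca_{\delta,j}|\le \frac{B}{I(s)}\,\bigl[(j+1)^d-j^d\bigr],\qquad B:=\frac{\Gamma^2(\frac{d+1}{2})}{\Gamma(d+1)}.
\]

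From here I would split the target exponent $\frac{5d-3}{4j-1}$ into two contributions. For the volume increment, $(j+1)^d-j^d\le d(j+1)^{d-1}=d\,j^{d-1}(1+\tfrac1j)^{d-1}\le d\,j^{d-1}e^{(d-1)/j}$, and the elementary bound $\frac{d-1}{j}\le\frac{4(d-1)}{4j-1}$ (valid for $j\ge1$) contributes the factor $e^{(4d-4)/(4j-1)}$. For the Beta ratio I would use symmetry, $\frac{B}{2}=\int_0^{1/2}[u(1-u)]^{(d-1)/2}\,du$, together with the tail bound $\frac B2-I(s)=\int_s^{1/2}[u(1-u)]^{(d-1)/2}\,du\le(\tfrac12-s)\,4^{-(d-1)/2}=\frac{2^{-(d+1)}}{j+1}$, which should yield $\frac{B}{I(s)}\le 2\,e^{(d+1)/(4j-1)}$. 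Multiplying the constant $2$ by these two factors reproduces $2d\,j^{d-1}e^{(5d-3)/(4j-1)}$ exactly, since $(4d-4)+(d+1)=5d-3$.

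The main obstacle is the final lower bound on $I(s)$: converting $\frac B2-I(s)\le\frac{2^{-(d+1)}}{j+1}$ into $\frac{B}{I(s)}\le 2e^{(d+1)/(4j-1)}$ amounts to proving $I(s)\ge\frac B2 e^{-(d+1)/(4j-1)}$. Writing $\frac B2-I(s)\le\frac B2 x$ with $x=\frac{1}{(j+1)\sqrt\pi}\frac{\Gamma(d/2+1)}{\Gamma((d+1)/2)}$, the needed estimate $\frac{1}{1-x}\le e^{(d+1)/(4j-1)}$ follows from $e^y\ge1+y$ only while $x$ stays bounded away from $1$, i.e. once $j$ is large relative to $\sqrt d$. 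In the complementary regime the integrand $[u(1-u)]^{(d-1)/2}$ concentrates near $u=\frac12$ on the scale $d^{-1/2}$, so a Laplace-type refinement of the tail is needed (or, more simply, the observation that the claimed right-hand side is already enormous there) to close the bound uniformly in $d$ and $j$. Carrying this through while preserving the exact constant $2$ in the principal case is the delicate part of the argument.
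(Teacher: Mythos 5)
Your setup is the same as the paper's: both arguments specialize \eqref{PHole} and Proposition \ref{dProp} to $r=\delta$, $R=(j+1)\delta$, use $(j+1)^d-j^d\le d\,j^{d-1}e^{(d-1)/j}$, and split the exponent as $(4d-4)+(d+1)=5d-3$. Your identity $\sin^{d+1}\tfrac{A}{2}\cdot{}_2F_1=\tfrac{d+1}{2}I(s)$ with $I(s)=\int_0^s[u(1-u)]^{(d-1)/2}\,du$ is an exact repackaging of the two factors the paper estimates separately, and the reduction of the leading constant to $2$ via the duplication formula is correct. The genuine gap is in the one step you flag yourself. The additive tail estimate $\tfrac{B}{2}-I(s)\le\bigl(\tfrac12-s\bigr)4^{-(d-1)/2}=\tfrac{2^{-(d+1)}}{j+1}$ cannot be upgraded to the multiplicative bound $I(s)\ge\tfrac{B}{2}e^{-(d+1)/(4j-1)}$ uniformly, because $\tfrac{B}{2}\asymp\sqrt{\pi/(2d)}\,2^{-d}$ is itself only a vanishing fraction of the integrand's maximum $4^{-(d-1)/2}$: your relative error $x\asymp\sqrt{d/(2\pi)}/(j+1)$ exceeds $1$ whenever $j\lesssim\sqrt{d}$, and there the lower bound on $I(s)$ is vacuous. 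Neither proposed patch is executed, and the ``easy'' one does not work: the only a priori count available in that regime is the gross volume bound $|\cx\cap\ca_{\delta,j}|\le(j+2)^d$, and for fixed $j\ge 3$ and large $d$ this \emph{exceeds} the claimed right-hand side (at $j=3$ one compares $5^d$ against roughly $2d\,(3e^{5/11})^{d}\approx 2d\,(4.73)^d$), so the target is not ``already enormous'' there and some genuine argument is still required.

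The repair is exactly what the paper does, and it is simpler than a Laplace refinement: bound the Euler integrand pointwise by $(1-st)^{(d-1)/2}\ge(1-t/2)^{(d-1)/2}$ (valid since $s<\tfrac12$), which in your notation gives the \emph{multiplicative} estimate $I(s)=s^{(d+1)/2}\int_0^1 t^{(d-1)/2}(1-st)^{(d-1)/2}\,dt\ge(2s)^{(d+1)/2}\,\tfrac{B}{2}$, and then absorb the remaining factor via $(2s)^{(d+1)/2}=\bigl(1-\tfrac{1}{2(j+1)}\bigr)^{(d+1)/2}\ge e^{-\frac{d+1}{4j+2}}\ge e^{-\frac{d+1}{4j-1}}$, using $1-u\ge e^{-u/(1-u)}$. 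This is uniform in $d$ and $j$ and closes your argument with the same constants. (Incidentally, your value $s=\tfrac12-\tfrac{1}{4(j+1)}$ is the correct one; the paper's $D_j=\tfrac12-\tfrac{1}{4j}$ appears to be a slip, since only with $\tfrac12-\tfrac{1}{4(j+1)}$ does the exponent arithmetic $\tfrac{d+1}{4j+2}+\tfrac{4(d-1)}{4j}\le\tfrac{5d-3}{4j-1}$ go through.)
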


\begin{proof}
The proof is similar to that of Proposition \ref{lemma}. In the process, however, some parameters must be adjusted, and pertinent inequalities tightened. Using the same argument leading to Inequality \eqref{PHole}, we have
\begin{equation}\label{PHole-1}
\left| \cx \cap \ca_{\delta,j} \right| \leq \frac{|\ca_{\delta,j} |}{|B_{\ca_{\delta,j}}^d(\delta)|},
\end{equation}
where $B_{\ca_{\delta,j}}^d(\delta)  $ is the minimal intersection of any ball of radius $\delta$ centered at a point in the annulus $\ca_{\delta,j}$.
Gong through a similar process as in the proof of Proposition \ref{dProp}, which entails replacing $r$ by $\delta$ and  $R$ by $(j +1)\ \delta$, and consequently $\sin^2 \frac{A}{2}$ in Proposition \ref{dProp} being $D_j:=\left(\frac12 - \frac{1}{4j}\right)$ here,
we have
\begin{align}\label{PHole-2}
 |B_{\ca_{\delta,j}}^d(\delta)| \ge & D_j^{\frac{d+1}{2}} \,
 \frac{2^{d+1} \,  \pi^\frac{d-1}{2} \, \delta^d}{(d+1) \, \Gamma\left(\frac{d+1}{2}\right)} \,_2 F_1 \left(-\frac{d-1}{2},\frac{d+1}{2}; \frac{d+3}{2}; D_j \right).
\end{align}
It follows from Inequalities \eqref{PHole-1} and \eqref{PHole-2} that
\begin{align} \label{NW-estimate}
\left| \cx \cap \ca_{\delta,j}\right|
\leq &\frac{(d+1)\ \sqrt{\pi}}{2^{d+1}\ D_j^{\frac{d+1}{2}}}
 \frac{\Gamma\left(\frac{d+1}{2}\right)}{\Gamma\left(\frac{d}{2}+1\right)} \left[\,_2 F_1 \left(-\frac{d-1}{2},\frac{d+1}{2}; \frac{d+3}{2}; D_j \right)\right]^{-1}
 \left[  (j+1)^d - j^d \right].
\end{align}
As in the last part of the proof of Proposition \ref{lemma}, we show that
\begin{align} \label{NW-estimate-1}
 &(d+1)\ \sqrt{\pi}
 \frac{\Gamma\left(\frac{d+1}{2}\right)}{\Gamma\left(\frac{d}{2}+1\right)} \left[\,_2 F_1 \left(-\frac{d-1}{2},\frac{d+1}{2}; \frac{d+3}{2}; D_j \right)\right]^{-1}
 \le 2^{ \frac{d+3}{2}}.
\end{align}
We estimate the remaining pieces involving $j$ on the right hand of Inequality \eqref{NW-estimate} as follows.
\begin{align} \label{NW-estimate-2}
D_j^{-\frac{d+1}{2}}=&\left(\frac12 - \frac{1}{4j}\right)^{-\frac{d+1}{2}}
\le  2^{\frac{d+1}{2}}\ e^{\frac{d+1}{2(2j-1)}}.
\end{align}
\begin{align} \label{NW-estimate-3}
&(j+1)^d - j^d \le d\ (j+1)^{d -1} \le d\ j^{d -1}\ e^{\frac{d-1}{j}}.
\end{align}
Substituting inequalities \eqref{NW-estimate-1}, \eqref{NW-estimate-2}, and \eqref{NW-estimate-3} into the right hand side of Inequality \eqref{NW-estimate}, we get the desired estimate.
\end{proof}

\section{Optimal order Jackson type error estimate}

 We are interested in the class ${\mathbb K}_{\kappa, \alpha}$ of functions $K$ that satisfy the following conditions:
 \begin{description}
  \item{(i)}  $K: \R^{d} \rightarrow \R_+ $ is continuous on $\R^{d}$.
  \item{(ii)}  ${\dis m_1:= \min_{|x| \le 1} K(x) >0 }$.
  \item{(iii)} For a fixed $\alpha >0$, there holds $ K(x) \le \kappa (1+|x|^2)^{-\alpha}, \quad x \in \R^{d}$, \quad where $\kappa$ is a constant independent of $x$.
\end{description}
When the scaling factor $\frac1\epsilon$ is introduced, the decay condition in (iii) can be equivalently written as the following:
\begin{equation} \label{decay-condition}
K(\epsilon^{-1} x) \le \kappa  \min \left( 1, (|x|/\epsilon)^{-2\alpha}\right), \quad 0<\epsilon \le 1, \quad x \in \R^{d}.
\end{equation}
This is the form we will be using the most often throughout this paper. Suppose that values of a function $f \in C(X)$ are available at every point of a discrete set $\cx_n \subset X$. Assume that $\cx_n$ is quasi-uniformly distributed, and satisfy Inequalities \eqref{well-separated} and \eqref{fill-dist}.
We study the operator $\ct_n$ on $C(X) $:
\[
\ct_n:  f \mapsto \sum_{y \in \cx_n}f(y)K_n(x -y),
\]
where $K_n(x)=K\left(\beta_n x\right), \; \beta_n:=C^{-1}\ n^{1/d}$, and $K \in {\mathbb K}_{\kappa, \alpha}$.
We employ  the ``rational formation":
\[
\frac{1}{S_{K,n}(x)} \sum_{y \in \cx_n}f(y)K_n(x - y),\quad \mbox{where} \quad S_{K,n}(x)=\sum_{y \in \cx_n} K_n(x - y),
\]
to approximate a bounded and continuous function $f$ on $X$, and derive an optimal order Jackson type error estimate (Theorem \ref{gen-th}) with an explicit constant. (We refer readers to the historically-important result of Jackson \cite{jackson}.) The construction of approximants here is reminiscent of the original Shepard approximation. The new contribution here is the incorporation of the scaling factor $\beta_n$, which is crucial for the optimality of the approximation order.
The feasibility of this approximation scheme depends on the condition:  $S_{K,n}(x)\ne 0, \; x \in X$, which we will address in Lemma \ref{gen-le}. It is worth noting that the above rational formations of shifts of the basis function $K_n$ give rise to a linear operator on $C(X)$. In the literature, these are called ``quasi-interpolation operators."


Let $X \subset \RR^{d}$ be a convex domain. Let $BC(X)$ denote the normed linear space consisting of bounded and continuous functions on $X$. Here
 the norm of $f \in BC(X)$ is defined by
\[
\|f\|_{BC}:= \inf \{M: |f(x)| \le M, \; \mbox{for all} \; x \in X \}.
\]
\newcommand{\bk}{{\mathbb K}_{\kappa, \alpha}}
 \begin{proposition} \label{gen-pro}
Assume that the points in $\cx_n$ are quasi-uniformly distributed satisfying Inequalities \eqref{well-separated} and \eqref{fill-dist}, and  that $K \in \bk$ for $\alpha > \frac{d+1}{2}$. Then the operator sequence $\ct_n$ is uniformly bounded on $BC(X)$.
More precisely, we have
\[
\|\ct_n \| \le  \kappa \, K_d \, C_{\alpha,d}, \quad n \in \mathbb{N},
\]
where ${\dis C_{\alpha,d}:=  1  + \sum_{k=1}^d {d \choose k}\ \frac{2\alpha-k + 2}{2\alpha-k + 1}, }$ and $K_d:=K_d(c,C)$ is a constant depending only on $d, c,$ and $C$. The explicit expression of $K_d$  is given in Proposition \ref{lemma}.
\end{proposition}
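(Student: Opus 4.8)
The plan is to reduce the operator norm to a single pointwise kernel sum and then control that sum with the annulus machinery of Section~2. Since every $K\in\bk$ is nonnegative, for any $f\in BC(X)$ and any $x\in X$ we have $|\ct_n f(x)|\le \|f\|_{BC}\sum_{y\in\cx_n}K_n(x-y)$, whence
$\|\ct_n\|\le \sup_{x\in X}\sum_{y\in\cx_n}K_n(x-y)$. It therefore suffices to bound this supremum by $\kappa\,K_d\,C_{\alpha,d}$ uniformly in $n$, and the whole argument reduces to estimating the kernel sum at a fixed $x$.

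The key observation is that the scaling is tuned exactly to the annulus thickness: since $\beta_n=C^{-1}n^{1/d}$, we have $\beta_n^{-1}=Cn^{-1/d}$, which is precisely the common thickness of the annuli $\ca_{n,j}$ of Proposition~\ref{lemma}. Fixing $x$ and decomposing $\R^d$ into the concentric annuli $\ca_{n,j}$ centered at $x$ (by translation invariance Proposition~\ref{lemma} applies with any center), where $\ca_{n,1}$ is the ball of radius $Cn^{-1/d}$, I would split the sum as $\sum_{y\in\cx_n}K_n(x-y)=\sum_{j\ge 1}\sum_{y\in\cx_n\cap\ca_{n,j}}K_n(x-y)$. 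For $y\in\ca_{n,j}$ one has $|x-y|\ge (j-1)Cn^{-1/d}$, so the decay bound \eqref{decay-condition} with $\epsilon=\beta_n^{-1}$ gives $K_n(x-y)=K(\beta_n(x-y))\le \kappa\min\!\left(1,(j-1)^{-2\alpha}\right)$, i.e.\ the bound $\kappa$ on the central ball and $\kappa(j-1)^{-2\alpha}$ for $j\ge2$. Combining this with the point count $|\cx_n\cap\ca_{n,j}|\le K_d[\,j^d-(j-1)^d\,]$ from Proposition~\ref{lemma} yields
\[
\sum_{y\in\cx_n}K_n(x-y)\ \le\ \kappa\,K_d\left(1+\sum_{j=2}^{\infty}(j-1)^{-2\alpha}\left[\,j^d-(j-1)^d\,\right]\right),
\]
where the leading ``$1$'' comes from the central ball $\ca_{n,1}$.

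The remaining task, which I expect to be the main obstacle, is to sum the tail series in closed form and identify the resulting constant with $C_{\alpha,d}$. Setting $m=j-1$ and expanding $j^d-(j-1)^d=(m+1)^d-m^d=\sum_{k=1}^{d}\binom{d}{k}m^{d-k}$ reduces the series to $\sum_{k=1}^{d}\binom{d}{k}\sum_{m\ge1}m^{\,d-k-2\alpha}$, and each inner sum is handled by the monotone comparison $\sum_{m\ge1}m^{-p}\le 1+\int_1^\infty t^{-p}\,dt$. Here the hypothesis $\alpha>\tfrac{d+1}{2}$ enters decisively: it forces every exponent $d-k-2\alpha$ to lie below $-1$ (the dominant $k=1$ term behaves like $\sum_j j^{\,d-1-2\alpha}$, summable precisely beyond the threshold $\alpha>d/2$), so all the $p$-series converge and every denominator stays positive up to $k=d$. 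Carrying out this bookkeeping produces a constant of the stated shape $C_{\alpha,d}=1+\sum_{k=1}^{d}\binom{d}{k}\frac{2\alpha-k+2}{2\alpha-k+1}$; sharpening the decay step to the native form $(1+|x-y|^2/\epsilon^2)^{-\alpha}$ (rather than the crude $|x-y|^{-2\alpha}$) is what pins the explicit fraction down and keeps the constant valid for $\alpha$ close to the threshold.

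Three delicate points deserve attention. First, one must verify the minimal-distance inequality $|x-y|\ge (j-1)\beta_n^{-1}$ on each $\ca_{n,j}$ and confirm that the central ball contributes exactly the leading term. Second, the bound must be shown to be independent of both $x$ and $n$, so that taking $\sup_{x\in X}$ is harmless; this is immediate because neither the decay estimate nor Proposition~\ref{lemma} sees $x$, and the scaling $\beta_n^{-1}$ matches the annulus thickness for every $n$. Third, the constant $K_d$ from Proposition~\ref{lemma} should be factored out cleanly throughout, so that the final estimate reads $\|\ct_n\|\le \kappa\,K_d\,C_{\alpha,d}$ with $K_d$ and $C_{\alpha,d}$ each given in closed form, as required.
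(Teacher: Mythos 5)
Your argument is correct and follows the paper's proof essentially verbatim: you reduce $\|\ct_n\|$ to $\sup_{x\in X}\sum_{y\in\cx_n}K_n(x-y)$, decompose $\R^d$ into the concentric annuli of thickness $\beta_n^{-1}=Cn^{-1/d}$, apply the decay bound \eqref{decay-condition} together with the point count of Proposition \ref{lemma}, and arrive at the same series $1+\sum_{j\ge1}\bigl[(j+1)^d-j^d\bigr]j^{-2\alpha}$. The only difference is that you actually carry out the binomial expansion and integral comparison that the paper dismisses as ``easy to see''; your term-by-term bound $\frac{2\alpha+k-d}{2\alpha+k-d-1}$ per $\binom{d}{k}$ does not literally match the displayed $\frac{2\alpha-k+2}{2\alpha-k+1}$, but this is an indexing/bookkeeping discrepancy in identifying $C_{\alpha,d}$, not a gap in the argument.
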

\begin{proof}
It suffices to show that
\[
\sup_{x \in X}\sum_{y \in \cx_n}  K_n ( x -y) \le \kappa \, K_d\  C_{\alpha,d}.
\]
Fix an $x \in X$.  For $j \in \N$, let $\ca_{n,j}=  \{y: (j-1) \beta_n^{-1} \le |y-x| < j \beta_n^{-1}\}$,
 which are concentric annuli centered at $x$ and having thickness $\beta_n^{-1}$. Denote $Z_{n,j}=X \bigcap \ca_{n,j}$.   For each fixed $n$, we have
\[
\R^{d}= \bigcup^\infty_{j=1} \ca_{n,j},
\]
which induces a decomposition of  $\cx_n$,
\[
\cx_n= \bigcup^\infty_{j=1} Z_{n,j}, \quad Z_{n,j}:=\cx_n \cap \ca_{n,j}.
\]
We use the decay condition in Inequality \eqref{decay-condition} to write
\[
K(\beta_n(x-y)) \le \frac{\kappa}{(j-1)^{2 \alpha}}, \quad y \in Z_{n,j}, \quad j \ge 2.
\]
Making use of the upper bound estimate for  $|Z_{n,j}|$ ( Proposition \ref{lemma}), we have
\begin{align}\label{esti}
 \sum_{y \in \cx_n}  K_n ( x -y) \le &  \sum_{j=1}^\infty \sum_{y \in Z_{n,j}}  K (\beta_n ( x -y)) \nonumber  \\
  \le &  \kappa\ K_d \left(|Z_{n,1}| + \sum_{j=2}^\infty |Z_{n,j}| (j-1)^{-2\alpha}\right) \nonumber  \\
  \le & \kappa\ K_d \left( 1  + \sum_{j=1}^\infty \left[ (j+1)^d - j^d \right] j^{-2\alpha}\right).
\end{align}
It is easy to see that $C_{\alpha, d}$ is an upper bound for the expression inside the parenthesis above. Since $x \in X$ is arbitrarily chosen, the proof is complete.
\end{proof}

%

\begin{lemma} \label{gen-le}
Let $X$ be a convex domain in $\R^d$. Suppose $\cx_n$ is quasi-uniformly distributed in $X$ satisfying Inequalities \eqref{well-separated} and \eqref{fill-dist}, and  that $K \in \bk$. Then we have
\[
 S_{K,n}(x) \ge  m_1, \quad x \in X.
\]
\end{lemma}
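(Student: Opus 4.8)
The plan is to exploit the synchronization between the dilation factor $\beta_n$ and the fill-distance bound. The factor $\beta_n = C^{-1} n^{1/d}$ is engineered so that a \emph{single} data point lying within the fill distance of $x$ already contributes at least $m_1$ to the sum; every remaining term is then simply discarded using the nonnegativity of $K$. So the lemma is really a statement about the interplay of hypotheses (i)--(ii) with the quasi-uniformity bound, and its proof should be short.

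First I would fix an arbitrary $x \in X$ and produce a nearby data point. By the definition of the Hausdorff distance \eqref{hausforff-d} we have $\inf_{y \in \cx_n} |x - y| \le h_{\cx_n}$, and since $\cx_n$ is well-separated (Inequality \eqref{well-separated}), only finitely many of its points lie in any bounded ball; hence this infimum is attained, giving a point $y^* \in \cx_n$ with $|x - y^*| \le h_{\cx_n}$. Applying the quasi-uniformity bound \eqref{fill-dist}, namely $h_{\cx_n} \le C n^{-1/d}$, I would then compute the scaled distance
\[
\left| \beta_n (x - y^*) \right| = C^{-1} n^{1/d}\, |x - y^*| \le C^{-1} n^{1/d} \cdot C n^{-1/d} = 1,
\]
so that $\beta_n(x - y^*)$ lies in the closed unit ball. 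Condition (ii) on $K \in \bk$ now yields $K_n(x - y^*) = K(\beta_n(x - y^*)) \ge \min_{|z| \le 1} K(z) = m_1$.

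To finish, I would invoke condition (i), that $K$ maps into $\R_+$, so that every summand of $S_{K,n}(x) = \sum_{y \in \cx_n} K_n(x - y)$ is nonnegative; retaining only the term $y = y^*$ gives $S_{K,n}(x) \ge K_n(x - y^*) \ge m_1$. Since $x \in X$ was arbitrary, this is the claimed bound on all of $X$.

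The only genuinely delicate point is the attainment of the infimum, i.e. securing a data point at distance \emph{at most} $h_{\cx_n}$ rather than merely one approaching it: a crude $\varepsilon$-argument would produce a factor $1 + \beta_n \varepsilon > 1$ that breaks the clean containment in the unit ball, so I would be careful to deduce attainment from the local finiteness guaranteed by well-separatedness. Everything else follows directly from the definitions; in particular, convexity of $X$ is not needed in this lemma beyond making $X$ a legitimate domain, and the scaling identity $\beta_n h_{\cx_n} \le 1$ is precisely what makes the argument work.
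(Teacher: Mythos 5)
Your proof is correct and follows essentially the same route as the paper's: locate a data point $y^*$ within the fill distance $C n^{-1/d}$ of $x$, observe that the scaling $\beta_n = C^{-1}n^{1/d}$ places $\beta_n(x - y^*)$ in the closed unit ball so condition (ii) gives a contribution of at least $m_1$, and discard the remaining nonnegative terms. Your extra care about the attainment of the infimum in the Hausdorff distance (via local finiteness from well-separatedness) is a point the paper glosses over, but it does not change the argument.
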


\begin{proof}
Recall that
${\displaystyle m_1= \min_{0 \le |x|  \le 1} K(x)}$.
 Since $h_{\cx_n}$, the Hausdorff distance between the two sets $\cx_n$ and $X$, is less than or equal to $C n^{-1/d}$, for any fixed $x \in X$, there is at least one $x_{j_0}\; ( 1 \le j_0 \le n)$ such that $|x_{j_0}- x| \le C n^{-1/d}$. It follows that
\[
S_{K,n}(x) = \sum_{y \in \cx_n} K(\beta_n (x-y))  \ge m_1.
\]
The proof is complete.
\end{proof}
\newcommand{\cf}{\mathcal F}

Let $f \in BC(X)$. We consider approximating $f$ by the function $\cf_n$:
\[
\cf_n(x)=[S_{K,n}(x)]^{-1} \sum_{y \in \cx_n}f(y) K(\beta_n(x-y)).
\]
The function $\cf_n$ is constructed using a rational formation of shifts of the base function $K_n$.

To gauge the order of approximation, we make use of the modulus of continuity $\omega(f,t)$ of $f$ which is
a function from
$\R_+$ to $\R_+$, defined by:
\[
\omega(f,t)= \sup_{\substack{x,y \in X \\|x-y|\le t}} |f(x)-f(y)|.
\]
The function $\omega(f,t)$ satisfies the following property often referred to as ``sub-additivity":
\begin{equation} \label{subadditivity}
\omega(f,\gamma\ t) \le (\gamma +1) \omega(f,t), \quad t \ge 0, \quad \gamma \ge 0.
\end{equation}

We have the following Jackson-type approximation error estimate.
\begin{theorem}\label{gen-th}
Let $X$ be a convex subset of $\R^d$. Suppose that $\cx_n$ is quasi-uniformly distributed in $X$ satisfying Inequalities \eqref{well-separated} and \eqref{fill-dist}, and  that $K \in \bk$ for $\alpha > \frac{d+2}{2}$. Then, for every $f \in BC(X)$, we have
 \[
\|[S_{K,n}(x)]^{-1}\ct_n(f)(x) - f(x)\|_{BC}  \le  \kappa\ m^{-1}_1\ (C+1)\ K_d \ C^*_{\alpha, d}\ \omega\,(f, n^{-1/d}),
\]
in which
\[
 C^*_{\alpha, d}=\sum^d_{k=0}{d+1 \choose k} \frac{2 \alpha -k +2}{2 \alpha -k +1} - \frac{1}{2 \alpha -d +1}.
 \]
\end{theorem}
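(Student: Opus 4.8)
The plan is to use that the quasi-interpolant reproduces constants, thereby reducing the pointwise error to a weighted sum of local oscillations controlled by the modulus of continuity. First I would note that $\ct_n(1)(x)=\sum_{y\in\cx_n}K(\beta_n(x-y))=S_{K,n}(x)$, so $f\mapsto[S_{K,n}]^{-1}\ct_n(f)$ fixes constants. Writing $f(x)=[S_{K,n}(x)]^{-1}\sum_{y\in\cx_n}f(x)K(\beta_n(x-y))$ and subtracting,
\[
[S_{K,n}(x)]^{-1}\ct_n(f)(x)-f(x)=[S_{K,n}(x)]^{-1}\sum_{y\in\cx_n}\big(f(y)-f(x)\big)K(\beta_n(x-y)).
\]
Since $K\ge0$, Lemma \ref{gen-le} gives $S_{K,n}(x)\ge m_1>0$, and $|f(y)-f(x)|\le\omega(f,|x-y|)$, so
\[
\big|[S_{K,n}(x)]^{-1}\ct_n(f)(x)-f(x)\big|\le m_1^{-1}\sum_{y\in\cx_n}\omega(f,|x-y|)\,K(\beta_n(x-y)),
\]
which is the quantity I would estimate uniformly in $x\in X$.

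Next I would reuse the annular decomposition from the proof of Proposition \ref{gen-pro}. Fixing $x$, set $\ca_{n,j}=\{y:(j-1)\beta_n^{-1}\le|y-x|<j\beta_n^{-1}\}$ and $Z_{n,j}=\cx_n\cap\ca_{n,j}$, recalling $\beta_n^{-1}=Cn^{-1/d}$. On $Z_{n,j}$ I control three factors: (i) the oscillation, via $|x-y|<jCn^{-1/d}$ together with the subadditivity \eqref{subadditivity}, which gives $\omega(f,|x-y|)\le(jC+1)\,\omega(f,n^{-1/d})\le(C+1)\,j\,\omega(f,n^{-1/d})$; (ii) the kernel, via the decay condition \eqref{decay-condition}, which gives $K(\beta_n(x-y))\le\kappa$ on $Z_{n,1}$ and $K(\beta_n(x-y))\le\kappa(j-1)^{-2\alpha}$ on $Z_{n,j}$ for $j\ge2$; and (iii) the cardinality, via Proposition \ref{lemma}, $|Z_{n,j}|\le K_d[j^d-(j-1)^d]$. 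Multiplying the three bounds and summing over $j$, the factor $\kappa\,m_1^{-1}(C+1)K_d\,\omega(f,n^{-1/d})$ comes out, leaving the numerical series
\[
1+\sum_{j=2}^\infty j\,[j^d-(j-1)^d]\,(j-1)^{-2\alpha},
\]
whose leading $1$ is the contribution of $Z_{n,1}$.

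Finally I would evaluate this series and identify it with $C^*_{\alpha,d}$. Re-indexing $m=j-1$ and expanding by the binomial theorem,
\[
(m+1)\big[(m+1)^d-m^d\big]=\sum_{k=0}^d\binom{d+1}{k}m^k-m^d,
\]
so the series splits into power sums $\sum_{m\ge1}m^{k-2\alpha}$. Each is estimated by comparison with $\int_1^\infty x^{k-2\alpha}\,dx$, the hypothesis $\alpha>\frac{d+2}{2}$ ensuring convergence of every term; bounding the top-order sum $\sum_{m\ge1}m^{d-2\alpha}$ from below absorbs the leading $1$ and produces the correction $-\frac{1}{2\alpha-d+1}$, while the remaining terms assemble into $\sum_{k=0}^d\binom{d+1}{k}\frac{2\alpha-k+2}{2\alpha-k+1}$. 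Collecting everything yields the stated estimate with constant $\kappa\,m_1^{-1}(C+1)K_d\,C^*_{\alpha,d}$, and since $x$ was arbitrary I pass to the $BC$-norm.

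I expect the main obstacle to be this last step: the bookkeeping that turns the series into the exact closed form $C^*_{\alpha,d}$, in particular matching the $\binom{d+1}{k}$ pattern, correctly handling both the $j=1$ term and the subtracted top-order term, and verifying that the integral-comparison estimates are valid and sharp enough under the precise threshold $\alpha>\frac{d+2}{2}$ (the extra half-power beyond bare convergence reflecting the additional factor $j$ introduced by the subadditivity of $\omega$). The reduction steps beforehand are routine once the reproduction-of-constants identity, Lemma \ref{gen-le}, and the counting bound of Proposition \ref{lemma} are in hand.
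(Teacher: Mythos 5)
Your reduction is exactly the paper's: the reproduction-of-constants identity, the annular decomposition into the sets $Z_{n,j}$, the oscillation bound $|f(y)-f(x)|\le (C+1)\,j\,\omega(f,n^{-1/d})$ (the paper obtains this by chaining $j$ points of mutual distance at most $Cn^{-1/d}$ along the segment from $x$ to $y$ and applying sub-additivity link by link, which is equivalent to your direct use of \eqref{subadditivity} with $\gamma=jC$), the decay bound \eqref{decay-condition}, and the counting bound of Proposition \ref{lemma}. You arrive at precisely the paper's series
\[
1+\sum_{j=1}^{\infty}\bigl[(j+1)^{d+1}-(j+1)\,j^{d}\bigr]\,j^{-2\alpha},
\]
so up to that point the proposal is correct and identical in method to the paper's proof.

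The gap is in the final step, and you were right to flag it as the main obstacle: as sketched, it would fail. Writing the series as $1+\sum_{k=0}^{d}\binom{d+1}{k}\sum_{m\ge1}m^{k-2\alpha}-\sum_{m\ge1}m^{d-2\alpha}$ is fine, but integral comparison gives $\sum_{m\ge1}m^{k-2\alpha}\le 1+\frac{1}{2\alpha-k-1}=\frac{2\alpha-k}{2\alpha-k-1}$, not $\frac{2\alpha-k+2}{2\alpha-k+1}=1+\frac{1}{2\alpha-k+1}$; the latter is strictly smaller, and the inequality $\zeta(s)\le\frac{s+2}{s+1}$ is false for $s$ near $2$, which is exactly the range $s=2\alpha-d>2$ allowed by the hypothesis $\alpha>\frac{d+2}{2}$. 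Similarly, absorbing the leading $1$ into the correction $-\frac{1}{2\alpha-d+1}$ requires $\zeta(2\alpha-d)\ge 1+\frac{1}{2\alpha-d+1}$, which fails once $2\alpha-d\ge 3$ (e.g.\ $\zeta(3)\approx 1.202<\tfrac54$). Indeed the closed form does not appear to be a valid upper bound for all admissible parameters: for $d=1$ and $\alpha=1.6$ the series equals $1+\zeta(2.2)+\zeta(3.2)\approx 3.657$, while $C^{*}_{1.6,1}\approx 3.551$. The safe repair is to use $\sum_{m\ge1}m^{d-2\alpha}\ge 1$ to cancel the leading $1$ and bound the series by $\sum_{k=0}^{d}\binom{d+1}{k}\frac{2\alpha-k}{2\alpha-k-1}$ (convergent since $2\alpha>d+1$), accepting a slightly larger constant of the same structure. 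To be fair, the paper's own proof dismisses this step with ``it is easy to see,'' so the weakness you identified is present there as well; everything before it in your proposal is sound.
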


\begin{proof}
 For a given $f \in BC(X)$, we  write
\begin{equation}\label{esti3}
f(x) = [S_{K,n}(x)]^{-1} \sum_{y \in \cx_n} f(x) K(\beta_n (x-y)), \quad  x \in X.
\end{equation}
Fix an $ x \in X$. Let $Z_{n,j}$ be as defined in the proof of Proposition \ref{gen-pro}. Fix a $y \in Z_{n,j}$, join $y$ and $x$ by a straight line segment, on which we take $(j-1)$ points $t_\nu \,(\nu =1,\ldots, (j-1))$, and let $t_0 := x$ and $t_{k} := y$ so that $|t_{\nu-1} - t_\nu| \le C\ n^{-1/d}\,(\nu =1,\ldots, j)$. By the sub-additivity property of $\omega(f,t)$, we have
\begin{equation} \label{estimate1}
|f(y) - f(x)| \le \sum^{j}_{\nu=1} |f(t_\nu) - f(t_{\nu-1})| \le j \ \omega(f,C\ n^{-1/d} ) \le (C+1) j \ \omega(f,n^{-1/d} ).
\end{equation}
By Proposition \ref{lemma}, Inequalities  \eqref{decay-condition}, \eqref{estimate1},   we have
\begin{align*}
     &|f(x)-[S_{K,n}(x)]^{-1}\ct_n(f)(x)| \\
 \le & m_1^{-1} \sum_{y \in \cx_n} \left|f(x) - f(y)\right| K(\beta_n (x-y)) \\
 \le & m_1^{-1}   \sum^\infty_{j=1} \sum_{y \in Z_{n,j}} \left|f(x) - f(y)\right| K(\beta_n (x-y))   \\
   \le               &  \kappa\ m_1^{-1} (C+1) K_d(c,C)\ \omega(f,n^{-1/d} )\left\{ 1+ \sum^\infty_{j=1}  \left[(j+1)^{d+1}- (j+1)\ j^{d}\right] j^{-2\alpha} \right\}.
\end{align*}
It is easy to see that $C^*_{\alpha, d}$ is an upper bound for the expression inside the curly braces above.
Since the above estimate is true for an arbitrary $x \in X$, the desired result follows.
\end{proof}
%


\noindent Department of Mathematics\\
\noindent Missouri State University \\
\noindent Springfield, MO 65897 \\
\noindent USA

\bigskip

\noindent Department of Mathematics\\
\noindent Missouri State University \\
\noindent Springfield, MO 65897 \\
\noindent USA

\bigskip

\noindent Shanghai Key Laboratory for Contemporary Applied Mathematics\\
\noindent School of Mathematical Science \\
\noindent Fudan University, Shanghai\\
\noindent China


\begin{thebibliography}{99}


\bibitem{askey}
G. E. Andrews, R. Askey, and R. Roy, Special Functions,
Encyclopedia of Mathematics and its Applications (No. 71), Cambridge University Press, 1999.

\bibitem{bede} B. Bede, E. D. Schwab, H. Nobuhara, and I. J.  Rudas, Approximation by Shepard type pseudo-linear operators and applications to Image Processing, International Journal of Approximate Reasoning 50(1) (2009), 21 - 36.

\bibitem{behe}S. Beheshti, and M.A. Dahleh, Noisy Data and Impulse Response Estimation, IEEE Transactions on Signal Processing 58 (2010), no. 2, 510 - 521.

\bibitem{chen-cao} Z. Chen and F. Cao,  Spherical scattered data quasi-interpolation by Gaussian radial basis function, Chin. Ann. Math. Ser. B 36 (2015), no. 3, 401 -  412.
\bibitem{cohn} H. Cohn and N. Elkies, New upper bounds on sphere packings I, Ann. Math. 157 (2003), 689 - 714.
\bibitem{conway}  J. H. Conway and N. J. A. Sloane, Sphere
    Packings, Lattices and Groups, 2nd ed., Springer-Verlag, New
  York, 1993.

\bibitem{cost}   F. A.  Costabile, F.  Dell'Accio, and F. D. Tommaso, Enhancing the approximation order of local Shepard operators by Hermite polynomials, Computers \& Mathematics with Applications 64\ (11) (2012), 3641 - 3655.
\bibitem{HNW} T. Hangelbroek, F.J. Narcowich, and J. D. Ward, Kernel approximation on manifolds I: bounding the Lebesgue constant, SIAM J. Math. Anal. 42 (2010), 1732 - 1760.


\bibitem{HNSW} T. Hangelbroek, F.J. Narcowich, X. Sun, and J. D. Ward, Kernel approximation on manifolds II: the $L_{\infty}$-norm of the $L_2$-projector, SIAM J. Math. Anal. 43 (2011), 662 - 684.


\bibitem{jackson}
D. Jackson, \"{U}ber die Genauigkeit der Ann\"{a}herung stetiger Funktionen durch ganze rationale Funktionen gegebenen Grades und trigonometrische Summen gegebener Ordnung (On the precision of the approximation of continuous functions by polynomials of given degree and by trigonometric sums of given order), Preisschrift und Dissertation, Univ. G\"{o}ttingen, June 14, 1911.

\bibitem{munkres} J. Munkres,  Topology (2nd ed.), Prentice Hall, 1999.
%
%
%
%
\bibitem{NW} F.J. Narcowich and J.D. Ward, Norms of inverses and condition numbers for matrices associated with scattered data, J. Approx. Th. 64 (1991), 69 -94.

\bibitem{shepard} D. Shepard, A two-dimensional interpolation function for irregularly-spaced data,  Proceedings of the 1968 ACM National Conference (1968), 517 - 524.

\bibitem{szab}  J. Szabados, Direct and converse approximation theorems for the Shepard operator,
Approximation Theory \& Its Applications 7\ (3) (1991), 63 - 76.

\bibitem{tikk}
D. Tikk and P Baranyi, Comprehensive analysis of a new fuzzy rule interpolation method,
IEEE Transactions on Fuzzy Systems 8\ (3) (2000), 281 - 296.

\bibitem{wendland} H. Wendland, Scattered data approximaition, Cambridge University Press, 2005.

\bibitem{wu-1} Z. Wu, Scattered data approximation: theories, models, and algorithms (in Chinese), China Science Publishing, 2007.
\bibitem{wu} Z. Wu and J. Liu, Generalized Strang-Fix condition for scattered data quasi-interpolation, Adv. Comput. Math. 23 (2005), 201 - 214.


\end{thebibliography}
\end{document}